\documentclass[12pt,letterpaper,reqno]{amsart}
\usepackage[letterpaper,margin=1.2in,headheight=15pt]{geometry} 
\usepackage{graphicx,bbm}
\usepackage{amsmath, amssymb, amsfonts,amsthm,mathrsfs}
\usepackage[utf8]{inputenc}
\usepackage{epstopdf}
\usepackage{tikz-cd} 
\usepackage{setspace}
\usepackage{xcolor}
\usepackage{tcolorbox}
\usepackage[titletoc,title]{appendix}
\usepackage{enumitem}
\usepackage{mathtools}
\usepackage{newtxmath} 
\usepackage{hyperref}
\usepackage{tikz}
\usepackage{float}
\usetikzlibrary{arrows.meta, decorations.pathmorphing, calc}
\definecolor{darkred}{rgb}{0.5,0.15,0.15}
\hypersetup{colorlinks=true,urlcolor=darkred,linkcolor=darkred,citecolor=darkred}

\newcommand{\be}{\begin{eqnarray}}
\newcommand{\ee}{\end{eqnarray}}
\newcommand{\bea}{\begin{eqnarray}}
\newcommand{\eea}{\end{eqnarray}}

\newcommand{\ben}{\begin{eqnarray}}
\newcommand{\een}{\end{eqnarray}}

\theoremstyle{plain}
\newtheorem{thm}{Theorem}[section]
\newtheorem{prop}[thm]{Proposition}

\theoremstyle{definition}

\newtheorem*{question*}{Question}

\theoremstyle{remark}
\newtheorem{rem}[thm]{Remark}

\numberwithin{equation}{section}

\title{Modularity of Feynman Integrals and Factorization of Appell F2 Systems}
\author{Murad Alim}
\author{Filippo La Mantia}
\address{Maxwell Institute for Mathematical Sciences, Edinburgh EH14 4AS, UK\\
Department of Mathematics, Heriot-Watt University, Edinburgh EH14 4AP, UK\\
\small Department of Mathematics, Technical University of Munich, Boltzmannstr. 3, 85748, Garching, Germany}

\begin{document}

\maketitle

\begin{abstract}
Certain Feynman integrals can be expressed as periods of differential forms on Calabi--Yau manifolds. We provide a mathematical proof of a result of Duhr and Maggio on the modularity of the two-dimensional conformal traintrack integral. Our approach is based on a factorization of the associated Picard-Fuchs system into a tensor product of Gauss hypergeometric systems via a gauge transformation due to Clingher, Doran and Malmendier.
\end{abstract}
\vspace{-0.1cm}
\section{Introduction}
Some families of Feynman integrals admit a natural interpretation as periods of algebraic varieties and their differential equations encode geometric information, such as variations of polarized Hodge structures and modularity, see e.g. \cite{wein,Doran2,Duhr2}. In the case of the \emph{fishnet integrals} \cite{fish1,fish2,fish3}, the associated varieties are families of Calabi-Yau manifolds. In this note we investigate a particular example in this class, namely the conformal two-loop traintrack integral considered by Duhr and Maggio \cite{duhr2025feynmanintegralsellipticintegrals}. The associated geometry is a two-parameter family of K3 surfaces, whose Picard-Fuchs system is given by the Appell $F_2$ hypergeometric system. Our main result is a factorization of this system into the tensor product of two single-variable Gauss hypergeometric systems, obtained via a gauge transformation due to Clingher, Doran and Malmendier \cite{Clingher_2017}. This provides a direct proof of the modular parametrization of \cite{duhr2025feynmanintegralsellipticintegrals}, previously obtained through an ansatz and matching terms of the power series expansions, yielding a basis of periods that is well suited for analyzing the modularity of the family. Geometrically, the factorization reflects the Kummer surface structure of the underlying K3 which can be constructed out of a product of elliptic curves \cite{Clingher_2017,Griffin_2018}. This example fits into the broader framework in which Picard–Fuchs systems of algebraic varieties, particularly for elliptic and K3 families, admit factorization structures and modular parametrizations, see e.g. \cite{Doran2000, LianYau1996I,LianYau1996II,Alim:2018vkq}.
\section*{Acknowledgements}
We would like to thank A. Clingher, C. Doran, C. Duhr, A. Malmendier for comments, discussions and correspondence.

\section{The hypergeometric functions $_2F_1$ and $F_2$}
We briefly recall the hypergeometric functions relevant for our analysis. The Gauss hypergeometric function ${}_2F_1$ is characterized as a solution of the differential equation:
\begin{equation} \label{def:2F1sys} \small
z(1-z)\frac{d^2 f}{dz^2} + \bigl(\gamma - (\alpha+\beta+1)z\bigr)\frac{df}{dz} - \alpha\beta f = 0.
\end{equation}
For $\operatorname{Re}(\gamma)>\operatorname{Re}(\beta)>0$, it admits the Euler integral representation:
{\small \[
{}_2F_1\!\left(
\begin{array}{c}
\alpha,\beta\\
\gamma
\end{array}
\middle|\, z
\right) =\frac{\Gamma(\gamma)}{\Gamma(\beta) \Gamma(\gamma-\beta)} \int_{0}^{1} \frac{d t}{t^{1-\beta}(1-t)^{1+\beta-\gamma}(1-z t)^{\alpha}}. \label{def:2F1int}
\] }

A two-variable generalization is the Appell $F_2$ function, which is defined as a solution of the differential system:
\begin{equation} \label{def:F2sys} \small
\begin{cases}
     x\left(1-x\right) \frac{\partial^{2} F}{\partial x^{2}}-x y \frac{\partial^{2} F}{\partial x \partial y}+\left(\gamma_{1}-\left(\alpha+\beta_{1}+1\right) x\right) \frac{\partial F}{\partial x}-\beta_{1} y \frac{\partial F}{\partial y}-\alpha \beta_{1} F=0  \\ \\
 y\left(1-y\right) \frac{\partial^{2} F}{\partial y^{2}}-x y \frac{\partial^{2} F}{\partial x \partial y}+\left(\gamma_{2}-\left(\alpha+\beta_{2}+1\right) y\right) \frac{\partial F}{\partial y}-\beta_{2} x \frac{\partial F}{\partial x}-\alpha \beta_{2} F=0 
\end{cases}
\end{equation}
For $\operatorname{Re}\left(\gamma_{1}\right)>\operatorname{Re}\left(\beta_{1}\right)>0$ and $\operatorname{Re}\left(\gamma_{2}\right)>\operatorname{Re}\left(\beta_{2}\right)>0$, the function admits the following integral representation, as proven in \cite{Clingher_2017}:
{\small \begin{align}  
F_2\!\left(
\begin{array}{c} 
\alpha;\,\beta_1,\beta_2\\
\gamma_1,\gamma_2
\end{array}
\middle|\, x,y
\right)
&=
\frac{\Gamma(\gamma_1)\Gamma(\gamma_2)}
{\Gamma(\beta_1)\Gamma(\gamma_1-\beta_1)\Gamma(\beta_2)\Gamma(\gamma_2-\beta_2)}  \label{def:form2}
\\
& \times \int_{0}^{1}  \frac{dt_1 \wedge dt_2}{t_1^{1-\beta_{2}} t_2^{1-\beta_{1}}(1-t_1)^{1+\beta_{2}-\gamma_{2}}(1-t_2)^{1+\beta_{1}-\gamma_{1}}\left(1-x t_2-y t_1\right)^{\alpha}}. \notag
\end{align}}
\\
From \cite{Sasaki1988LinearDE}, we have the following property: 
\begin{prop} \label{quad}
The four linearly independent solutions to the system (\ref{def:F2sys}) are quadratically related if and only if:
\[  
 \alpha=\beta_{1}+\beta_{2}-\frac{1}{2}, \quad \gamma_{1}=2 \beta_{1} , \quad \gamma_{2}=2 \beta_{2}.  
\]
\end{prop}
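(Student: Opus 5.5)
We plan to prove the criterion by relating quadratic relations among the four solutions to the existence of an invariant symmetric bilinear form on the solution space, i.e. to the Appell $F_2$ system being (projectively) self-dual. The system (\ref{def:F2sys}) has rank four. A quadratic relation $\sum_{i,j} q_{ij} F_i F_j = 0$ with constant symmetric $Q=(q_{ij})$ among a basis $F_1,\dots,F_4$ holds identically if and only if $Q$ is preserved by the monodromy representation, up to the character by which $F_iF_j$ transform. Equivalently, the system is isomorphic to its dual twisted by a rank-one system. Our first step is therefore to compute the dual (adjoint) of the $F_2$ system as a rank-four integrable connection. We will use the Euler-type integral representation (\ref{def:form2}): it exhibits solutions as periods of twisted cohomology of $\mathbb{C}^2$ minus the arrangement $t_1=0,1$, $t_2=0,1$, $1-xt_2-yt_1=0$, with local system exponents $(\beta_2-1,\beta_1-1,\gamma_2-\beta_2-1,\gamma_1-\beta_1-1,-\alpha)$. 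By twisted Poincaré duality (intersection pairing, as in Cho--Matsumoto), the dual system is the $F_2$ system with all exponents negated, after the standard shift back into the integrand form.

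Second, we determine when the negated-exponent system is gauge-equivalent to the original one times a rank-one factor $x^{a}y^{b}(1-x-y)^{c}$. This is done by comparing local exponents along the singular divisors $x=0$, $y=0$, $x=1$, $y=1$, $x+y=1$ and at infinity. Along $x=0$ the exponents are $\{0,1-\gamma_1\}$. Self-duality up to twist requires $\{0,1-\gamma_1\} = a - \{0,1-\gamma_1\}$, so $a=1-\gamma_1$. The analogous condition along $y=0$ gives $b=1-\gamma_2$. Along $x+y=1$ the exponents are $0$ (triple) and $\gamma_1+\gamma_2-\alpha-\beta_1-\beta_2... $; the precise value is read off from the integral. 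Matching the exponents at infinity in each variable, namely $\{\alpha,\beta_1\}$-type data, gives the conditions $\gamma_1=2\beta_1$ and $\gamma_2=2\beta_2$. The condition along $x+y=1$, which is where the double point of the arrangement appears, then forces $\alpha=\beta_1+\beta_2-\tfrac12$. For sufficiency, we show that under these conditions the intersection form transported by the gauge map gives a nonzero symmetric invariant form, and this yields the quadratic relation. Symmetry rather than skew-symmetry follows from the parity of the twisted $2$-form: the middle dimension is $2$, so the pairing is symmetric.

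We expect the main obstacle to be necessity. Local exponent matching only produces necessary conditions modulo integer shifts and accidental coincidences. To get exact equalities we would argue with generic parameters and then use rigidity. The $F_2$ system is known (Kato; Sasaki--Yoshida) to be determined by its local data when irreducible, so the isomorphism with its twisted dual holds exactly when the exponents match. We must also treat reducible cases separately, and invariance of $Q$ must be nondegenerate. Alternatively, sufficiency can be checked directly via the Clingher--Doran--Malmendier factorization used later in the paper. Under the stated conditions the system becomes a tensor product of two Gauss systems, each of rank two, where the determinant form gives a skew pairing. The tensor product of two skew forms is symmetric, which produces the quadratic relation explicitly. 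In the end, the cleanest route is to cite \cite{Sasaki1988LinearDE} for necessity and to give the tensor-product argument for sufficiency.
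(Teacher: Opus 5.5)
The paper gives no proof of this proposition; it simply quotes it from Sasaki--Yoshida \cite{Sasaki1988LinearDE}. Your final plan is to cite them for necessity and prove sufficiency via the Clingher--Doran--Malmendier factorization. That agrees with the paper on necessity, and the sufficiency argument is correct. It is the same mechanism the paper uses later for $\Pi^T\Sigma\Pi=0$.

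To make sufficiency airtight, state that it rests on the first row of $g$ being $(g_{11},0,0,0)$. Then the $F_2$ solutions are $F^{(a,b)}=g_{11}\,f_a(\Lambda_1^2)\,h_b(\Lambda_2^2)$. Hence $\sum J_{ac}J_{bd}F^{(a,b)}F^{(c,d)}=g_{11}^2\,(f^TJf)(h^TJh)=0$, which is the Segre relation $F^{(1,1)}F^{(2,2)}-F^{(1,2)}F^{(2,1)}=0$. If the first row also had entries on the derivative components, this relation would fail. There is also no circularity: the gauge identity in Proposition~\ref{prop:gauge} is an identity between explicit connection matrices and can be checked without \cite{Sasaki1988LinearDE}.

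The self-duality/exponent route that fills most of your proposal would not prove necessity, nor sufficiency as written. There are two concrete problems.

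First, monodromy invariance of a symmetric $Q$ is only necessary. It makes $F^TQF$ single-valued up to a character, not zero. Vanishing needs an extra input, such as a twisted period relation identifying $F^TQF$ with a cohomology intersection number that you then show vanishes, or $\int\Omega\wedge\Omega=0$ by Hodge type in the geometric case. So ``an invariant symmetric form yields the quadratic relation'' is false, and so is the ``if'' half of the equivalence in your first paragraph.

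Second, local exponents and monodromy, rigid or not, see the parameters only modulo $\mathbb{Z}$. Carry out your matching explicitly:
\begin{itemize}
\item Along $x=1$, $y=1$, $x+y=1$ the nonzero exponents are $\gamma_1-\alpha-\beta_1+\beta_2$, $\gamma_2-\alpha-\beta_2+\beta_1$ and $\gamma_1+\gamma_2-\alpha-\beta_1-\beta_2$.
\item At infinity the exponents are $\{\alpha,\alpha,\alpha,\beta_1+\beta_2\}$.
\item Under the stated conditions each of these gaps equals $\tfrac12$, so the conditions are consistent with self-duality.
\item But self-duality up to a rank-one twist only yields congruences, such as $2(\gamma_1-\alpha-\beta_1+\beta_2)\in\mathbb{Z}$, $\gamma_1+\gamma_2-2\alpha\in\mathbb{Z}$ and $2(\alpha-\beta_1-\beta_2)\in\mathbb{Z}$. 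It never gives $\gamma_i=2\beta_i$ exactly.
\end{itemize}

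Rigidity cannot repair this. The contiguity operator $x\partial_x+\gamma_1-1$ intertwines the system with parameter $\gamma_1$ and the one with $\gamma_1-1$, and is generically bijective on solutions. So the parameters $\alpha=\beta_1+\beta_2-\tfrac12$, $\gamma_1=2\beta_1+1$, $\gamma_2=2\beta_2$ give the same monodromy, including an invariant symmetric form, as the good case. Yet by the statement itself their solutions satisfy no quadratic relation. Necessity therefore has to come from the differential system itself, as in Sasaki--Yoshida. Falling back on the citation is the right call, but the exponent sketch should not be presented as an argument.
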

\vspace{-0.1cm}
We will be interested in the case $\beta_1 = \beta_2 = \frac{1}{2}$ and $\gamma_1 = \gamma_2 = 1$, where the condition is verified. The differential systems (\ref{def:2F1sys}) and (\ref{def:F2sys}) can be written as first-order Pfaffians:
\begin{equation} \label{pfaff} 
    d \vec{f} = A_{_2F_1}(z) \cdot \vec{f}, \qquad  d \vec{F} = A_{F_2}(x,y)\cdot \vec{F}, 
\end{equation} with $
\vec{f} = \left(f(z),z\partial_zf(z)\right)^T$ and $\vec{F} = \left(F(x,y),x\partial_xF(x,y),y\partial_yF(x,y),xy\partial_x\partial_yF(x,y) \right)^T
$. Consider now the tensor product of two copies of the first system:
\[
\tilde{H}=\tilde{f}\bigl(\Lambda_1^{2}\bigr)\otimes \tilde{f}\bigl(\Lambda_2^{2}\bigr),
\]
in some new variables \((\Lambda_1^{2},\Lambda_2^{2})\). Then:
\[
d\tilde{H}
=
\left(
A_{2}F_{1}\bigl(\Lambda_1^{2}\bigr)\otimes I
+
I\otimes A_{2}F_{1}\bigl(\Lambda_2^{2}\bigr)
\right)\tilde{H}.
\] The following proposition shows that this system is related to the Appell $F_2$ system (\ref{def:F2sys}) by a simple gauge transformation \cite[~2.4]{Clingher_2017}: 
 \begin{prop} \label{prop:gauge}
 The connection form of the Appell $F_2$ system verifying the condition of Proposition \ref{quad} satisfies:
\begin{equation} \label{gauge}
    A_{F_2}(x,y)|_{T\left(\Lambda_{1}, \Lambda_{2}\right)} =  g \cdot \left( A_{_2F_1}(\Lambda_1^2) \otimes \mathbb{I}+\mathbb{I} \otimes A_{_2F_1}(\Lambda_2^2)\right) \cdot g^{-1} + dg \cdot g^{-1} ,
\end{equation}
with change of variables $T(\Lambda_1,\Lambda_2)=\left(\frac{4\Lambda_1\Lambda_2}{(\Lambda_1+\Lambda_2)^2},-\frac{(1-\Lambda_1^2)(1-\Lambda_2^2)}{(\Lambda_1+\Lambda_2)^2}\right)$.
 \end{prop}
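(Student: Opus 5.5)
The plan is to find the gauge matrix $g$ explicitly and then verify the identity (\ref{gauge}) by direct computation. The verification is algebraic once $g$ is known. The real content is a function-level identity that pins $g$ down.

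First I would produce the classical factorization at the level of solutions. For $\beta_1=\beta_2=\tfrac12$, $\gamma_1=\gamma_2=1$ and $\alpha=\tfrac12$ (the parameters of Proposition \ref{quad}), one expects
\begin{equation*}
F_2\!\left(\begin{array}{c}\tfrac12;\tfrac12,\tfrac12\\ 1,1\end{array}\middle|\, T(\Lambda_1,\Lambda_2)\right)=\rho(\Lambda_1,\Lambda_2)\;{}_2F_1\!\left(\begin{array}{c}\tfrac12,\tfrac12\\ 1\end{array}\middle|\,\Lambda_1^2\right){}_2F_1\!\left(\begin{array}{c}\tfrac12,\tfrac12\\ 1\end{array}\middle|\,\Lambda_2^2\right).
\end{equation*}
Here $\rho$ is an algebraic prefactor, which I would guess to be $\rho=(\Lambda_1+\Lambda_2)/2$ up to normalization.

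I would obtain this from the double integral representation (\ref{def:form2}) by a birational change of the integration variables $(t_1,t_2)$. The denominator $1-xt_2-yt_1$ evaluated at $T(\Lambda_1,\Lambda_2)$ should split into a product of two factors, each linear in a new variable $s_i$. The double integral then separates into a product of two Euler integrals of the form appearing in the integral representation of ${}_2F_1$. Alternatively, one can check the identity by comparing power series near $\Lambda_1=\Lambda_2=0$. Since $\tilde H$ runs over all products of solutions, the general solution space of the tensor system maps onto the solution space of (\ref{def:F2sys}) via $\tilde H\mapsto \rho\,\tilde H$ on the first component.

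Next I would define $g$. Write $\vec F=g\,\tilde H$. The first row of $g$ is $(\rho,0,0,0)$, expressing $F=\rho\, f_1f_2$. The remaining rows are obtained by applying $x\partial_x$, $y\partial_y$ and $xy\partial_x\partial_y$ to $\rho f_1f_2$. Using the chain rule through the Jacobian of $T$, these derivatives become combinations of $\partial_{\Lambda_1}$ and $\partial_{\Lambda_2}$. Second derivatives $f_i''$ are eliminated with (\ref{def:2F1sys}). This leaves each entry of $\vec F$ as a combination of $f_1f_2$, $\theta_1 f_1\, f_2$, $f_1\,\theta_2 f_2$ and $\theta_1f_1\,\theta_2 f_2$, where $\theta_i=\Lambda_i^2\partial_{\Lambda_i^2}$. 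The coefficients are rational in $\Lambda_1,\Lambda_2$, and they form the columns of $g$.

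Once $\vec F=g\tilde H$ holds for a full fundamental system, differentiating gives $d\vec F=(dg\,g^{-1}+g\,A_{\otimes}\,g^{-1})\vec F$, where $A_{\otimes}$ denotes the tensor-product connection form. Since $\vec F$ is a fundamental matrix of the pulled-back $F_2$ Pfaffian, the connection form is uniquely determined by a fundamental solution matrix. This yields (\ref{gauge}), provided $g$ is invertible at generic points. I would check invertibility through $\det g$, which should be a nonzero rational function involving the Jacobian determinant of $T$.

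The main obstacle is the first step: establishing the product formula with the correct prefactor $\rho$, equivalently finding the right substitution in the double integral. If the integral route resists, I would fall back on a purely differential check. Pull back the two $F_2$ operators along $T$, conjugate by $\rho$, and show that the result lies in the left ideal generated by the two Gauss operators in $\Lambda_1^2$ and $\Lambda_2^2$. This is a finite, if tedious, rational-function computation.
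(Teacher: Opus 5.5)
Your closing step is correct: once $\vec F\circ T=g\,\tilde H$ holds for a full fundamental system with $\det g\neq 0$, differentiating gives (\ref{gauge}). The paper does not prove the proposition itself; it refers to \cite[\S2.4]{Clingher_2017}, where $g$ is written out and (\ref{gauge}) is a direct rational-function check. The genuine gap is in how you reach that step. A product formula for the single series $F_2$ shows only that \emph{one} element of $\rho\,(V_1\otimes V_2)$ solves the pulled-back $F_2$ system, where $V_i$ is the solution space of the Legendre equation in $\Lambda_i^2$. Your sentence ``since $\tilde H$ runs over all products of solutions, the solution space maps onto\ldots'' assumes exactly what must be proved. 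For the same reason, you cannot build $g$ by the chain rule from one solution and then assert $\vec F=g\tilde H$ for all of them. Two arguments would close this. (i) Analytic continuation plus irreducibility: continue the identity along loops in the $(\Lambda_1,\Lambda_2)$-plane. The left side stays a solution, and the right side becomes $\rho\,(M_1f_1)\otimes(M_2f_2)$. Here $M_i$ runs over the Legendre monodromy pulled back along $\Lambda_i\mapsto\Lambda_i^2$. That group contains the unipotents $M_0^2$ and $M_1$, which have different fixed lines, so it is irreducible, and $V_1\otimes V_2$ is then irreducible for the product. The curves $\Lambda_1=\Lambda_2$, $\Lambda_1\Lambda_2=\pm1$, $\Lambda_1+\Lambda_2=0$, where only $T$ degenerates, do not affect this because the tensor system is regular there. (ii) Your ``fallback'' ideal-membership check. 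It proves $\rho\,(V_1\otimes V_2)\subseteq T^*\mathrm{Sol}(F_2)$ directly, and with the rank count $4=4$ it is already a complete proof, equivalent to the paper's route. It should be your main argument.

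The identity you start from is also wrong as written, and the proposed series check cannot be carried out. $T$ is undefined at $(\Lambda_1,\Lambda_2)=(0,0)$, and $y\to\infty$ there. A preimage of $(x,y)=(0,0)$ is $(\Lambda_1,\Lambda_2)=(0,1)$, where $T$ is locally biholomorphic with Jacobian $\mathrm{diag}(4,2)$. Near that point ${}_2F_1(\tfrac12,\tfrac12;1;\Lambda_2^2)$ has a logarithmic singularity along $\Lambda_2=1$, i.e.\ along $y=0$, whereas $F_2$ is analytic there. The correct statement, matching $\Pi_0$ in the paper's theorem, is
\[
F_2\!\left(\tfrac12;\tfrac12,\tfrac12;1,1\,\middle|\,T(\Lambda_1,\Lambda_2)\right)=(\Lambda_1+\Lambda_2)\,{}_2F_1\!\left(\tfrac12,\tfrac12;1;\Lambda_1^2\right)\,{}_2F_1\!\left(\tfrac12,\tfrac12;1;1-\Lambda_2^2\right).
\]
On $\Lambda_2=1$ it reduces to Landen's transformation, and on $\Lambda_1=0$ to Pfaff's. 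Since ${}_2F_1(\tfrac12,\tfrac12;1;1-z)$ solves the same Legendre equation, this correction does not harm the structural claim once the gap above is closed. The Landen case also warns you that a \emph{birational} substitution in $(t_1,t_2)$ is unlikely to separate the double integral, because Landen relates periods of $2$-isogenous, non-isomorphic Legendre curves. Finally, the proposition covers all parameters of Proposition \ref{quad}, with $\rho=g_{11}=(\Lambda_1+\Lambda_2)^{2\beta_1+2\beta_2-1}$, whereas you treat only $\beta_1=\beta_2=\tfrac12$.
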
 

The explicit expressions for the matrices can be found in \cite[\S2]{Clingher_2017}, along with duality transformations. For what follows it is only relevant that the first row of the matrix $g$ is given by $(g_{11},0,0,0)$, with $g_{11} = (\Lambda_1 +\Lambda_2 )^{2\beta_1 + 2\beta_2 -1}$.
\section{The conformal two-dimensional traintrack integral}
We consider the family of so-called \emph{fishnet integrals} in two Euclidean dimensions; these were introduced in \cite{fish1} as: \begin{equation} \label{family} \small
I_G(\alpha)  =  
\int  
\prod_{i=1}^{\mathscr{l}} d^2 \xi_i 
 \prod_{i,j} \frac{1}{[(\xi_i-\xi_j)(\xi_i-\alpha_j)(\alpha_i-\alpha_j)]^{1/2}}, \qquad \xi_i,a_i \in \mathbb{R}^2.
\end{equation} 
In \cite{fish1} it was proven that each of these integrals is related to a family $M_G$ of Calabi-Yau $\mathscr{l}$-folds, in particular: \begin{equation} 
I_G(\alpha) = (-i)^{\mathscr{l}}\Pi_G^{\dagger}\Sigma\Pi_G , \label{int:genform}
\end{equation} where $\Sigma$ is the intersection matrix and $\Pi_G$ is the vector of periods of the holomorphic $(\mathscr{l},0)-$ form $\Omega_G$: \begin{equation*} \small
    \Pi_G = \left(\int_{\gamma_1}\Omega_G,\ldots, \int_{\gamma_N}\Omega_G\right)^T, \qquad \gamma_i \in H_{\mathscr{l}}(M_G,\mathbb{Z}).
\end{equation*} 

We are interested in the following element of the family (\ref{family}), called the \emph{conformal traintrack integral}:
\begin{equation*} \small 
    I_G(\alpha) = \int\frac{d\xi_1^2 \ d\xi_2^2}{[(\xi_1-\alpha_1)(\xi_1-\alpha_2)(\xi_1-\alpha_5)(\xi_1-\xi_2)(\xi_2-\alpha_5)(\xi_2-\alpha_3)(\xi_2-\alpha_4)]^{1/2}}.
\end{equation*} 

In \cite{duhr2025feynmanintegralsellipticintegrals} it was shown, after  complexifying the variables and exploiting the Yangian symmetry of the family, that the integral can be written in the form (\ref{int:genform}), with:
\begin{equation} \small
     \Omega_G = \frac{dt_1 \wedge dt_2}{\sqrt{t_1t_2(1-t_1)(1-t_2)(1-z_1t_1-(1-z_2)t_2)}}, \qquad t_i,z_i \in \mathbb{C}. \label{k3diff}
\end{equation}
The integral of this form corresponds, up to constant prefactors, to the integral representation of Appell $F_2$ function (\ref{def:form2}), with $(x,y) \leftrightarrow (z_1,1-z_2)$ and $\beta_1 = \beta_2 = \frac{1}{2}$ and $\gamma_1 = \gamma_2 = 1$. Its periods can then be computed as solutions to the system\footnote{since they are horizontal sections of the Gauss-Manin connection} (\ref{def:F2sys}). In \cite{duhr2025feynmanintegralsellipticintegrals}, a basis of local solutions is computed through the Frobenius method, and the modular parametrization is then obtained from an ansatz, by matching power series expansions. Here we instead prove it using Proposition \ref{prop:gauge}.
\begin{thm}
    A basis of solutions for the Picard-Fuchs system of the holomorphic form (\ref{k3diff}) in a neighborhood of $ (x,y)=(0,0) $, for $|\Lambda_1^2|<1$, $|1-\Lambda_2^2|<1$ is given by:
\begin{equation}
\begin{aligned}
\Pi_0(x, y) &= \frac{4}{\pi^2} (\Lambda_1 +\Lambda_2) K(\Lambda_1^2)\,K(1-\Lambda_2^2),\\
\Pi_1(x, y) &= \frac{4i}{\pi^2}(\Lambda_1 +\Lambda_2) K(1 - \Lambda_1^2)\,K(1-\Lambda_2^2),\\
\Pi_2(x, y) &= \frac{4i}{\pi^2} 
  (\Lambda_1 +\Lambda_2)
  K(\Lambda_1^2)\,K(\Lambda_2^2),\\
\Pi_3(x, y) &= \frac{4i^2}{\pi^2} 
  (\Lambda_1 +\Lambda_2)\,
  K(1 - \Lambda_1^2)\,K(\Lambda_2^2).
\end{aligned}
\label{eq:omega_block}
\end{equation}
 \begin{proof}
  Let $\Psi(z)$ be the period matrix of the $_2F_1$ system (\ref{def:2F1sys}) for $\beta_1,\beta_2 = \frac{1}{2}$, obtained by integrating the vector $ \vec{f} = \left(f(z),z\partial_{z}f(z)\right)^T$ over a canonical basis\footnote{with a symplectic intersection matrix $J =   \left(\begin{smallmatrix}   
       0&1\\-1&0
   \end{smallmatrix}\right)$}. By horizontality of the Gauss-Manin connection, the period matrix satisfies the same Pfaffian system (\ref{pfaff}) as $\vec{f}$. The first row of $\Psi(\Lambda)$ is then made of the periods of the holomorphic form of the \emph{Legendre family}, which can be taken around $z = 0$ as: \[ \label{Kbasis}\frac{2}{\pi}K(z), \quad \frac{2}{\pi}iK(1-z),\]
  where $K(z)$ is the complete elliptic integral of the first kind.
 Let $\Psi(\Lambda_1,\Lambda_2)$ be the period matrix of the tensor product system; by construction its first row is the tensor product of the periods above in the variables $(\Lambda_1^2,\Lambda_2^2)$.  By the gauge transformation (\ref{gauge}), $ \Phi(x,y) = g \cdot\Psi (\Lambda_1,\Lambda_2) $ is then, up to an overall normalization factor, the period matrix for the Appell $F_2$ system. In particular, since in the first row of $g$ the only non-zero element is $g_{11} =(\Lambda_1+ \Lambda_2)$, the result follows.
 \end{proof}
\end{thm}

\begin{rem}
    A similar factorization holds for every configuration of $\beta_1, \beta_2,\gamma_1,\gamma_2$ verifying the conditions of Proposition \ref{quad}, as shown in \cite{Clingher_2017}. In \cite{Griffin_2018}, the cases $\beta_1 = \beta_2 = \tfrac{1}{2}$ and $\beta_1 = \tfrac{1}{6}, \beta_2 = \tfrac{1}{2}$ were shown to be directly related to the \emph{Kummer surface} $ \mathcal{E}_1 \times \mathcal{E}_2$ via an analysis of the corresponding elliptic Jacobian fibrations.
    \end{rem}
\vspace{-0.05cm}
Since the gauge transformation acts on the relevant vector as the scalar $g_{11}$, the Hodge-Riemann bilinear relations follow directly from those of the underlying $_2F_1$ systems:
        \[
        \Pi^T \Sigma \Pi = 0, \quad (-i)^2\Pi^{\dagger} \Sigma \Pi \geq 0,  \qquad\Sigma = J\otimes J =\left(\begin{smallmatrix}   
       0&0&0&1\\0&0&-1&0\\0&-1&0&0\\1&0&0&0
   \end{smallmatrix}\right).\]
These are the quadratic relations expected from Proposition \ref{quad}. In \cite{duhr2025modularformsthreeloopbanana}  it was shown that this form of $\Sigma$ directly implies that $\Pi_0$ and the mirror map can be written in terms of products of modular forms. This follows directly from our basis of solutions. Indeed, we can introduce the canonical coordinates:
\begin{equation*}
    \tau_1 =  \frac{\Pi_1(x,y)}{\Pi_0(x,y)} =i\frac{K(1-\Lambda_1^2)}{K(\Lambda_1^2)},  \qquad \tau_2 = \frac{\Pi_2(x,y)}{\Pi_0(x,y)}  = i\frac{K(\Lambda_2^2)}{K(1-\Lambda_2^2)}.
\end{equation*} Notice that $\tau_1$ depends solely on $\Lambda_1$ and $\tau_2$ solely on $\Lambda_2$, which allows us to directly obtain the modular parameterization using the Hauptmodul $\lambda(\tau)$ of $\Gamma(2)$:
\begin{equation} \label{mirr}
    \Lambda_1^2(\tau_1) = \lambda(\tau_1) = \frac{\theta_2(\tau_1)^4}{\theta_3(\tau_1)^4}, \qquad \Lambda_2^2(\tau_2) = 1-\lambda(\tau_2)= \frac{\theta_4(\tau_2)^4}{\theta_3(\tau_2)^4},
\end{equation} where the \emph{Jacobi theta functions}  $\theta_i(\tau)$ are defined as: \[ \label{def:theta} 
        \theta_2(\tau) :=\sum_{n \in \mathbb{Z}}q^{(n+\frac{1}{2})^2}, \quad 
        \theta_3(\tau) :=\sum_{n \in \mathbb{Z}}q^{n^2}, \quad
        \theta_4(\tau) :=\sum_{n \in \mathbb{Z}}(-1)^nq^{n^2}.
    \]These are modular forms of weight $\frac{1}{2}$ for $\Gamma(2)$. Using $K(\lambda(\tau_i)) = \frac{\pi}{2}\theta_3(\tau_i)^2$, we can express the holomorphic period in modular coordinates: 
    \begin{equation*}
    \Pi_0(\tau_1,\tau_2) = \theta_2(\tau_1)^2\theta_3(\tau_2)^2 +\theta_4(\tau_2)^2\theta_3(\tau_1)^2,
\end{equation*} so we find that it is a modular form of weight $(1,1)$ and trivial character with respect to the monodromy group $\Gamma(2) \times \Gamma(2)$, according to the definitions in \cite[Def.~1.1]{yang}. The mirror map (\ref{mirr}) is instead a modular function. This is in agreement with the results of \cite{Griffin_2018} and \cite{duhr2025modularformsthreeloopbanana}.

\maketitle


\end{document}